\numberwithin{equation}{section}
\newtheorem{theorem}{Theorem}[section]
\newtheorem{lemma}[theorem]{Lemma}
\newtheorem{proposition}[theorem]{Proposition}
\newtheorem{corollary}[theorem]{Corollary}
\newtheorem{proof}{Proof}
\title{Pfister forms and  a conjecture due to Colliot--Th\'{e}l\`{e}ne in the mixed characteristic case}
\author{I.A. Panin, D.N. Tyurin}
\date{}
\begin{document}
	
	\maketitle
	\begin{abstract}
	let $R$ be a regular local ring of a mixed characteristic $(0,p)$ where $p\neq 2$ is a prime number.
	Suppose that the quotient ring $R/pR$ is also regular. Fix a non-degenerate Pfister form $Q(T_{1},\ldots,T_{2^{m}})$ over $R$
	and an invertible element $c$ in $R$. Then the equation $Q(T_{1},\ldots,T_{2^{m}})=c$ has a solution over $R$
	if and only if it has a solution over the fraction field $K$.
	\end{abstract}

	\section{Introduction}
	
	Let $R$ be a local regular ring with the fraction field $K$, $Q$ be a non-degenerate quadratic form over $R$ and $c$ in $R$ be an invertible element.
In (1977, \cite{C-T})
Colliot--Th\'{e}l\`{e}ne conjectured that the equation $Q=c$ has a solution over $R$ if and only if it has a solution over the field $K$.\\\\
In (\cite{P0}, 2009) I.A. Panin proved this conjecture for the case when $R$ contains the field of rational numbers. That proof was based
on a moving lemma for Levine--Morel's algebraic cobordism. Later, together with K.I. Pimenov he extended this result to the case when $R$
contains an arbitrary infinite field (\cite{PP1}, \cite{PP2}). The proof in \cite{PP1} still used a sofisticated tool. Namely,
it used the Gabber version of the de Jong theorem on alteration. The proof in \cite{PP2} is quite "elementary". It could be discovered
right after the paper \cite{OP}. However in all these papers the base field is supposed to be infinite.\\\\
Only in Scully's paper (2018,\cite{Sc}) the conjecture was proved for all regular local ring containing a field
(particularly, containing a finite field).
{\it Thus, the conjecture is solved in positive for all $R$ containing a field.
}\\\\
What is known about the conjecture in the mixed characteristic case?
Very little.
There is a resent Panin's result in \cite[Section 7]{P4}. However these results are obtained
under rather strong assumptions on $R$. Namely, $R$ is the local ring of an $A$-smooth scheme $X$
satisfying the $A$-relative {\it Noether Normalization Lemma}.\\\\
Also there is \cite[Corollary 3.3]{P5} where the conjecture is settled for any unramified $R$ under the assumption 
that the form 
$Q$ is a two-fold Pfister form.\\\\
{\it The goal of the present preprint paper is to extend the mentioned result
\cite[Corollary 3.3]{P5} to arbitrary Pfister forms (in the mixed characteristic case)}. \\\\
Our main result is the following theorem:
let $R$ be a regular local ring of a mixed characteristic $(0,p)$ where $p\neq 2$ is a prime number.
Suppose that the quotient ring $R/pR$ is also regular. Fix a non-degenerate Pfister form $Q(T_{1},\ldots,T_{2^{m}})$ over $R$
and an invertible element $c$ in $R$. Then the equation $Q(T_{1},\ldots,T_{2^{m}})=c$ has a solution over $R$
if and only if it has a solution over the fraction field $K$. 
This result is proved in the present preprint under the assumption that the residue field is infinite. 
However this result 
{\it remains true even without this assumption}. 
The latter will be proved in our next preprint using 
the main result due to Scully [Sc]. \\\\
Here is a sketch of the proof: assume that the equation $Q=c$ has a solution over $K$. It yields that for certain auxiliary Pfister form
$\widetilde{Q}$ over $R$, constructed from pair $(Q,c)$, is isotopic over $K$. Since $\widetilde{Q}_{K}$ is a Pfister form
this yields it is hyperbolic over $K$. According to the recent result of Cesnavicius \cite[Corollary 9.6]{C}.
the form $\widetilde{Q}$ is also hyperbolic over $R$. Using now the key geometric theorem formulated below, we conclude the proof.\\\\
Here is the mentioned 
{\it key geometric theorem}: 
let $M$ be a free module over a local ring $R$ of rank $2n$ with a fixed non-degenerate quadratic form $Q$
and let $N$ be its free direct summand of rank $n+1$ such that the restriction $Q|_{N}$ is also non-degenerate.
Then $N$ containes a unimodular isotopic vector. The most basic case of this theorem discussed in section~\ref{sec:vect}
is when $R$ coincides with some algebraically closed field $k$. Generally by $k$ we denote the residue field $R/\mathfrak{m}R$ of a local ring $R$.\\\\
All over the preprint the characteristic of various fields $k$ is not 2.\\\\
The paper is organized as follows. Sections~\ref{sec:vect} and~\ref{sec:mod} contain
the proof of the key geometric theorem for the cases of a vector space over the residue field $k:=R/\mathfrak{m}R$ of a local ring $R$
and a free module over the local ring $R$ respectively. Section~\ref{sec:main} contains the proof of the main result.\\\\

The authors thank the excellent environment of the International Mathematical Center at POMI. This work is supported by Ministry of Science and Higher Education of the Russian Federation, agreement № $075-15-2022-289$.

	\section{The case of a vector space}\label{sec:vect}
	
	Let $V$ be a vector space of dimension $n\in\mathbb{N}$ over an algebraicaly closed field $k$ ($char(k)\neq2$). 
For each $j\leqslant n$ denote by $X^{V}_{j}$ the projective variety of complete flags of length $j$ in $V$. In particular, for $j=1$ this variety  coincides with the projective space $\mathbb{P}(V)$ and therefore has dimension $n-1$. There exists a natural surjective morphism
	$$
	\pi_{j}=X^{V}_{j}\twoheadrightarrow X^{V}_{j-1}\,, \, \, \{L_{j}\supset\ldots\supset L_{1}\}\to\{L_{j-1}\supset\ldots\supset L_{1}\}\,.
	$$
Note that its fibre over a point $\widetilde{L}:=\{L_{j-1}\supset\ldots\supset L_{1}\}\in X^{V}_{j-1}$ is naturally isomorphic to the projectivisation $\mathbb{P}(V/L_{j-1})$ of the quotient vector space $V/L_{j-1}$ of dimension $n-j+1$. Thus the dimention of $\pi^{-1}(\widetilde{L})$ is equal to $n-j$. Using theorem of dimention of fibres and applying the induction by $j$ we get the following equality:
\begin{equation}\label{eq:dimX}
	dim(X^{V}_{j})=(n-1)+\ldots+(n-j)=nj-\dfrac{j(j+1)}{2}\,.
\end{equation}

Now suppose that $n\geqslant2$ and fix some non-degenerate quadratic form $Q$ on $V$. We say that a flag is totally isotopic if each of its subspaces is a totally isotopic subspace over $Q$ (note that such a subspace cannot be more than $\Large[\frac{n}{2}\Large]$-dimensional). We denote by $\mathring{X}^{V}_{j}$ the projective variety of complete totally isotopic flags of length $j$. There is an obvious closed embedding $\mathring{X}^{V}_{j}\hookrightarrow X^{V}_{j}$ that in case of $j=1$ associates $\mathring{X}^{V}_{1}$ with a hyperplane in $X^{V}_{1}\cong\mathbb{P}(V)$ defined by the equation $Q(v)=0$. Consequently $\mathring{X}^{V}_{1}$ has dimension $n-2$.

As in case of $X^{V}_{j}$  here existis a natural morphism
$$
\tau_{j}=\mathring{X}^{V}_{j}\twoheadrightarrow\mathring{X}^{V}_{j-1}\,, \, \, \{L_{j}\supset\ldots\supset L_{1}\}\to\{L_{j-1}\supset\ldots\supset L_{1}\}\,,
$$

 which is also surjective according to the Witt's theorem. Its fibre over a point $\widetilde{L}:=\{L_{j-1}\supset\ldots\supset L_{1}\}$ is isomorphic to the projective variety of all totally isotopic $j$-dimensional subspaces of $V$ that contain $L_{j-1}$. By definition any subspace of such kind belongs to $L_{j-1}^{\perp}$, therefore one can identify the $\tau_{j}^{-1}(L)$ with a hyperplane in $\mathbb{P}(L_{j-1}^{\perp}/L_{j-1})$ defined by the equation $\overline{Q}(v)=0$. Thus the dimention of $\tau_{j}^{-1}(L)$ is equal to $n-2(j-1)-2=n-2j$.  Using theorem of dimension of fibres and applying the induction by $j$ we get the following equality:
\begin{equation}\label{eq:dimXo}
	dim(\mathring{X}^{V}_{j})=(n-2)+\dots+(n-2j)=nj-j(j+1)\,.
\end{equation}

Finally denote by $\mathring{Y}^{V}_{j}$ the projective variety of all totally isotopic $j$-dimensional subspaces in $V$. One can define a surjective morphism
$$
\gamma_{j}=\mathring{X}^{V}_{j}\twoheadrightarrow\mathring{Y}^{V}_{j}\,, \, \, \{L_{j}\supset\ldots\supset L_{1}\}\to L_{j}\,,
$$
which fibre over a totally isotopic subspace $L$ in $V$ is naturally isomorphic to $X^{L}_{j-1}$. Thus, according to formula~\ref{eq:dimX} there is an equality
$$
dim(X^{L}_{j-1})=j(j-1)-\dfrac{j(j-1)}{2}=\dfrac{j(j-1)}{2}\,.
$$
Applying theorem of dimension of fibres we get an equality
\begin{equation}\label{eq:dimYo}
	dim(\mathring{Y}^{V}_{j})=nj-j(j+1)-\dfrac{j(j-1)}{2}\,.
\end{equation}

\begin{proposition}\label{prop:vect}
Let $k$ be an algebraically closed field.
Let  $V$ be a $2n$-dimensional $k$-vector space equipped with a hyperbolic form $Q$. 
Let $P\subset V$ be a $n+1$-dimensional subspace
such that the restriction $Q|_{P}$ is non-degenerate. Then there exists a totally isotopic $n$-dimensional subspace $W\subset V$ such that $dim(P\cap W)=1$.
\end{proposition}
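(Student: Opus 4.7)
The strategy is to fix an isotropic line $\ell \subset P$---such an $\ell$ exists because $k$ is algebraically closed, $\dim P = n+1 \ge 2$, and $Q|_P$ is non-degenerate---and then to produce $W$ as a maximal totally isotropic subspace containing $\ell$. Any such $W$ lies in $\ell^\perp$ automatically, so the condition $\dim(W \cap P) = 1$ is equivalent to $W \cap P = \ell$. It is then natural to pass to the quotient $V' := \ell^\perp/\ell$, a hyperbolic space of dimension $2(n-1)$, together with $P' := (P \cap \ell^\perp)/\ell$. A short check using $P \cap P^\perp = 0$ shows that the radical of the restriction of $Q$ to $P \cap \ell^\perp$ equals $\ell$, whence $P'$ is an $(n-1)$-dimensional subspace of $V'$ on which the induced form is non-degenerate. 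Moreover, $W \mapsto W/\ell$ is a bijection between $n$-dimensional totally isotropic subspaces of $V$ containing $\ell$ and maximal (i.e., $(n-1)$-dimensional) totally isotropic subspaces of $V'$, and it converts the condition $W \cap P = \ell$ into $W' \cap P' = 0$.

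It therefore suffices to prove the reduced statement: in a $2m$-dimensional hyperbolic space $V'$ (with $m = n-1$) containing an $m$-dimensional non-degenerate subspace $P'$, some maximal totally isotropic subspace of $V'$ is disjoint from $P'$. I would show this by a dimension count on $\mathring{Y}^{V'}_m$, which has dimension $m(m-1)/2$ by formula \eqref{eq:dimYo}. The ``bad locus'' $\{W' : W' \cap P' \ne 0\}$ is the image under the second projection of the incidence variety $\{(\ell', W') : \ell' \text{ is an isotropic line in } P',\ \ell' \subset W'\}$; fibring the latter over $\mathring{Y}^{P'}_1$ (of dimension $m-2$ by \eqref{eq:dimYo}) with fibers isomorphic to the variety of maximal totally isotropic subspaces of the $2(m-1)$-dimensional hyperbolic space $(\ell')^\perp/\ell'$ (of dimension $(m-1)(m-2)/2$) gives total dimension $(m-2)(m+1)/2 = m(m-1)/2 - 1$. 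This is strictly less than $\dim \mathring{Y}^{V'}_m$, so the good locus is non-empty; pulling back a resulting $W'$ along the projection $\ell^\perp \to V'$ yields the desired $W$.

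The principal subtlety is the low-dimensional boundary, where the formulas degenerate. For $n = 1$ one has $P = V$ and every maximal totally isotropic subspace works. For $n = 2$ the elementary inequalities $\dim(W \cap P) \ge \dim W + \dim P - \dim V = 1$ and $\dim(W \cap P) \le \mathrm{Witt}(Q|_P) = 1$ (the latter because a non-degenerate $3$-dimensional form over an algebraically closed field has Witt index $1$) force $\dim(W \cap P) = 1$ for every maximal totally isotropic $W$. For $n \ge 3$, equivalently $m \ge 2$, the dimension count above applies verbatim, and the main thing to be careful about is the genuine non-degeneracy of the induced form on $P'$.
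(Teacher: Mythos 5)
Your proof is correct, and it is a genuine (if mild) variant of the paper's argument rather than a reproduction of it. Both proofs are dimension counts on the variety $\mathring{Y}^{V}_{n}$ of maximal totally isotropic subspaces, using the formulas \eqref{eq:dimYo}--\eqref{eq:dimYon}; the difference is in how the ``bad locus'' is organized. The paper works directly in $V$: it stratifies $\mathring{Y}^{V}_{n}=Z_{1}\sqcup\dots\sqcup Z_{[\frac{n+1}{2}]}$ by $j=\dim(W\cap P)$ and bounds each $Z_{j}$ with $j\geqslant 2$ separately via the surjection $\lambda_{j}:Z_{j}\to\mathring{Y}^{P}_{j}$, obtaining $\dim Z_j\le\dim\mathring{Y}^V_n-(j^2-j)$. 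You instead first fix an isotropic line $\ell\subset P$ and pass to $V'=\ell^{\perp}/\ell$, which converts the problem into finding a Lagrangian of $V'$ disjoint from the $(n-1)$-dimensional non-degenerate subspace $P'$; after that a single incidence-variety bound (over $\mathring{Y}^{P'}_{1}$ only) suffices. What your route buys is a cleaner final count --- one computation, $\frac{(m-2)(m+1)}{2}=\dim\mathring{Y}^{V'}_{m}-1$, instead of a family of strata --- at the cost of the reduction bookkeeping (the radical of $Q|_{P\cap\ell^{\perp}}$ being exactly $\ell$, the bijection $W\mapsto W/\ell$, and the explicit treatment of $n\le 2$, all of which you handle correctly); the paper's route is more uniform and needs no low-dimensional case analysis because $Z_1\neq\emptyset$ follows for all $n$ at once. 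One point both arguments leave implicit, and which you might note: $\mathring{Y}^{V'}_{m}$ (resp.\ $\mathring{Y}^{V}_{n}$) has two connected components, so ``the bad locus has strictly smaller dimension'' yields a non-empty complement only because the variety is equidimensional (being homogeneous under the orthogonal group); this is harmless but worth a word.
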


\begin{proof}
	According to formula ~\ref{eq:dimYo} the dimension of the projective variety $\mathring{Y}^{V}_{n}$ of all totally isotopic $n$-dimensional subspaces $W\subset V$ is equal to
	\begin{equation}\label{eq:dimYon}
		dim(\mathring{Y}^{V}_{n})=2n^{2}-n(n+1)- \dfrac{n(n-1)}{2}=n^{2}-n-\dfrac{n^{2}-n}{2}=\dfrac{n^{2}-n}{2}\,.
	\end{equation}	
	For any $j\leqslant n$ denote by $Z_{j}$ the projective variety of all totally isotopic $n$-dimensional subspaces $W\subset V$ such that $dim(W\cap P)=j$. There is an obvious disjoint decomposition
	$$
	\mathring{Y}^{V}_{n}=Z_{1}\sqcup\dots\sqcup Z_{\large[\frac{n+1}{2}\large]}\,.
	$$
	For any $j\leqslant\large[\frac{n+1}{2}\large]$ there exists a natural morphism
	$$
	\lambda_{j}:Z_{j}\rightarrow\mathring{Y}^{P}_{j}\,,\,\,W\to P\cap W\,,
	$$
	which is surjective according to Witt's theorem. Its fibre $\lambda_{j}^{-1}(U)$ over a point $U\in\mathring{Y}^{P}_{j}$ coincides with a set
	$$
	\{W\subset V|\, dim(W)=n,\, Q|_{W}\equiv0,\, W\cap P=U\}\,,
	$$
	and therefore can be identified with a projective subvariety of
	$$
	\mathring{Y}^{U^{\perp}/U}_{n-1}=\{W\subset V|\, dim(W)=n,\, Q|_{W}\equiv0,\, U\subset W\}\,.
	$$
	Since $dim(U^{\perp}/U)=2(n-j)$ by applying formula~\ref{eq:dimYon} we get an equality
	$$
	dim\big(\mathring{Y}^{U^{\perp}/U}_{n-1}\big)= \dfrac{(n-j)^{2}-(n-j)}{2}\,.
	$$
	
	Thus, theorem of dimension of fibres gives us an estimate
	$$
	dim(Z_{j})\leqslant dim(\mathring{Y}^{P}_{j})+\dfrac{(n-j)^{2}-(n-j)}{2}\,.
	$$
	
	Finally, using formula~\ref{eq:dimYo} we get an estimate
	$$
	dim(Z_{j})\leqslant(n+1)j-j(j+1)-\dfrac{j(j-1)}{2}+\dfrac{(n-j)^{2}-(n-j)}{2}\,
	$$
	whose right hand side is equal to
	$$
	\dfrac{n^{2}-n}{2}-(j^{2}-j)=dim(\mathring{Y}^{V}_{n})-(j^{2}-j)\,.
	$$
	Therefore, if $j\geqslant2$ then there is a strict inequality $dim(Z_{j})<dim(\mathring{Y}^{V}_{n})$. In particular the subset
	$$
	Z_{1}=\mathring{Y}^{V}_{n}\backslash\big(Z_{2}\sqcup\dots\sqcup Z_{\large[\frac{n+1}{2}\large]}\big)
	$$
	is open and non-empty in $\mathring{Y}^{V}_{n}$. This concludes the proof.
\end{proof}

\begin{corollary}\label{cor:nonclosed}
Let $k$ be an infinite field.
Let  $V$ be a $2n$-dimensional $k$-vector space equipped with a hyperbolic form $Q$.
Let $P\subset V$ be a $n+1$-dimensional subspace
such that the restriction $Q|_{P}$ is non-degenerate. Then there exists a totally isotopic $n$-dimensional subspace $W\subset V$ such that $dim(P\cap W)=1$.
\end{corollary}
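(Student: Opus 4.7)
My plan is to reduce Corollary~\ref{cor:nonclosed} to Proposition~\ref{prop:vect} by base changing to an algebraic closure $\bar k$ of $k$ and then descending the conclusion back to $k$ using the rationality of the orthogonal Grassmannian. All the data -- $V$, $Q$, $P$ -- as well as the variety $\mathring{Y}^V_n$ and the locally closed stratification $\mathring{Y}^V_n = Z_1 \sqcup \dots \sqcup Z_{[(n+1)/2]}$ are defined over $k$. Applying Proposition~\ref{prop:vect} after base change to $\bar k$ shows that $Z_1$ is a non-empty open subvariety of $\mathring{Y}^V_n$; since the whole stratification is defined over $k$, this means $Z_1$ is a non-empty $k$-open subvariety of $\mathring{Y}^V_n$, and the task reduces to exhibiting a $k$-rational point in it.

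The key input is that $Q$ is hyperbolic over $k$, which makes $\mathring{Y}^V_n$ a ``split'' projective homogeneous $k$-variety. Concretely, I would fix a $k$-rational Witt decomposition $V = W_0 \oplus W_0'$ into two maximal totally isotropic subspaces. In the resulting dual bases, the open subvariety $U = \{W \in \mathring{Y}^V_n : W \cap W_0' = 0\} \subset \mathring{Y}^V_n$ is identified with the affine space of skew-symmetric $n \times n$ matrices, i.e., with $\mathbb{A}^{n(n-1)/2}_k$, and forms the ``big cell'' of one of the two geometric connected components of $\mathring{Y}^V_n$. A symmetric construction, replacing $W_0$ by the maximal isotropic subspace in the other family (obtained by swapping a single pair $e_i \leftrightarrow f_i$), produces an analogous affine chart, defined over $k$, for the other component.

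Since $Z_1$ is a non-empty open subset of $\mathring{Y}^V_n$, it meets at least one of these two affine charts non-trivially, giving a non-empty open subset of $\mathbb{A}^{n(n-1)/2}_k$ defined over $k$. The infiniteness of $k$ guarantees that any such subset has a $k$-rational point, which supplies the required $W \in Z_1(k)$ with $\dim(W \cap P) = 1$. I expect the main obstacle will be the connected-component bookkeeping: verifying that for a hyperbolic $Q/k$ both components of $\mathring{Y}^V_n$ are defined over $k$ and each is $k$-rational, so that whichever component $Z_1$ occupies, it meets a $k$-affine chart with dense $k$-points.
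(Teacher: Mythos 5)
Your proposal is correct and takes essentially the same route as the paper: the paper's proof just asserts that $\mathring{Y}^{V}_{n}$ is $k$-rational, that $Z_{1}$ is a non-empty open subvariety because it is non-empty over the algebraic closure (Proposition~\ref{prop:vect}), and that a non-empty open subset of a rational variety over an infinite field has a $k$-point. Your discussion of the big cell of skew-symmetric matrices and the two connected components merely supplies the details the paper leaves implicit.
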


\begin{proof}
The variety $\mathring{Y}^{V}_{n}$ is $k$-rational. Its open subvariety  
$Z_{1}$ is non-empty since it is non-empty over the algebraic closure of $k$. 
Since the field $k$ is infinite the set of $k$-rational points of $Z_{1}$ is non-empty.
This proves the corollary.
\end{proof}

\section{Case of a free module over a local ring}\label{sec:mod}

During this section by $R$ and $k$ we will denote a regular local ring and its residue field accordingly. Recall that $char(k)\neq2$.

\begin{lemma}\label{lem:matinv}
	A matrix $A\in Mat_{n\times n}(R)$ has an inverse if and only if the quotient mathrix $\overline{A}\in Mat_{n\times n}(k)$ has an inverse.
\end{lemma}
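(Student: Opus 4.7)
The plan is to reduce the statement to the well-known characterization of units in a local ring via the residue field, using the determinant as the bridge between invertibility of matrices and invertibility of scalars.

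First I would recall the basic fact about local rings: an element $r\in R$ is a unit if and only if $r\notin\mathfrak{m}$, equivalently if and only if its image $\overline{r}\in k=R/\mathfrak{m}$ is nonzero (hence a unit in the field $k$). This is the defining property of local rings and requires no proof here.

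Next I would invoke the standard criterion from commutative algebra: a square matrix $A$ over any commutative ring is invertible if and only if its determinant is a unit in that ring. The ``only if'' direction is immediate from multiplicativity of the determinant; the ``if'' direction is supplied by the adjugate (classical adjoint) formula $A\cdot\mathrm{adj}(A)=\det(A)\cdot I$, which allows one to write down the inverse explicitly as $\det(A)^{-1}\mathrm{adj}(A)$. The same criterion applied over $k$ says that $\overline{A}$ is invertible if and only if $\det(\overline{A})\neq 0$.

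Finally I would combine these two observations using the fact that the reduction map $R\to k$ is a ring homomorphism, so it commutes with the polynomial determinant expression: $\det(\overline{A})=\overline{\det(A)}$. Chaining the equivalences: $A$ is invertible over $R$ iff $\det(A)$ is a unit in $R$ iff $\overline{\det(A)}$ is nonzero in $k$ iff $\det(\overline{A})$ is nonzero in $k$ iff $\overline{A}$ is invertible over $k$. This completes the argument. There is no real obstacle here; the lemma is essentially a one-line consequence of the local-ring axiom once one passes through the determinant, and the only thing to be careful about is to invoke the adjugate formula (rather than Gaussian elimination, which is not available over $R$) to justify that unit determinant implies invertibility.
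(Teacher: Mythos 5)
Your argument is correct and follows essentially the same route as the paper: reduce matrix invertibility to the determinant being a unit, and use the local-ring fact that $\det(A)$ is a unit in $R$ precisely when $\det(\overline{A})=\overline{\det(A)}$ is nonzero in $k$. Your version just spells out the adjugate step that the paper leaves implicit.
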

\begin{proof}
	One implication is trivial. Suppose that $\overline{A}$ is invertible. It means that $det(\overline{A})$ is not equal to zero in $k$, which in turn means that  $det(A)$ does not belong to the maximal ideal of $R$. Thus $det(A)$ is invertible which means that $A$ has an inverse.
\end{proof}

\begin{lemma}\label{lem:freemod}
	Let $M$ be a free $R$-module of rank $n$. Let $\overline{u}_{1},\dots,\overline{u}_{r}$ be a set of lineary independent elements of $k$-linear vector space $\overline{M}$ and fix some arbitrary liftings $u_{i}\in M$ for each $\overline{u}_{i}$. Then the submodule $U:=\langle u_{1},\ldots,u_{r}\rangle_{R}$ of $M$ is a free direct summand of $M$ of rank $r$.
\end{lemma}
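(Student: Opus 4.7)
The plan is to reduce the statement to the case $r=n$, where one shows that the lifted elements form a basis of $M$, from which the direct summand conclusion is immediate. First I would extend $\overline{u}_{1},\dots,\overline{u}_{r}$ to a $k$-basis $\overline{u}_{1},\dots,\overline{u}_{r},\overline{u}_{r+1},\dots,\overline{u}_{n}$ of the $n$-dimensional vector space $\overline{M}=M\otimes_{R}k$, and lift each $\overline{u}_{i}$ for $i>r$ to an arbitrary element $u_{i}\in M$.

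Next I would fix an $R$-basis $e_{1},\dots,e_{n}$ of $M$ and form the $n\times n$ matrix $A\in\operatorname{Mat}_{n\times n}(R)$ whose $i$-th column consists of the coordinates of $u_{i}$ in that basis. Its reduction $\overline{A}\in\operatorname{Mat}_{n\times n}(k)$ is precisely the change-of-basis matrix from $\overline{e}_{1},\dots,\overline{e}_{n}$ to $\overline{u}_{1},\dots,\overline{u}_{n}$, hence invertible over $k$. By Lemma~\ref{lem:matinv} the matrix $A$ is then invertible over $R$, which means that $u_{1},\dots,u_{n}$ is itself an $R$-basis of $M$.

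Once this is established, the submodule $U=\langle u_{1},\dots,u_{r}\rangle_{R}$ is free of rank $r$, and $M=U\oplus\langle u_{r+1},\dots,u_{n}\rangle_{R}$ displays it as a free direct summand, finishing the proof. There is no real obstacle here: the argument is a clean application of the determinantal criterion of Lemma~\ref{lem:matinv}, which replaces the usual Nakayama-type reasoning. The only point requiring a (trivial) remark is that the particular choice of liftings $u_{r+1},\dots,u_{n}$ and of the basis $e_{1},\dots,e_{n}$ plays no role in the final statement, since the conclusion about $U$ depends only on $u_{1},\dots,u_{r}$.
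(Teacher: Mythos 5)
Your proof is correct and follows essentially the same route as the paper: extend $\overline{u}_{1},\dots,\overline{u}_{r}$ to a $k$-basis, lift, and invoke Lemma~\ref{lem:matinv} to conclude that the resulting transition matrix is invertible over $R$, so the lifts form an $R$-basis of $M$. The only cosmetic difference is that the paper first cites Nakayama's lemma to see that the lifts generate $M$ before applying the matrix criterion, a step your determinant argument renders unnecessary.
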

\begin{proof}
	Choose a basis of the type $\overline{u}_{1},\dots,\overline{u}_{r},\overline{v}_{1},\dots,\overline{v}_{n-r}$ in $\overline{M}$ and fix some liftings $v_{j}\in M$ for each $\overline{v}_{j}$. Then the elements $\{u_{i},v_{j}\}$ generate $M$ as an $R$-module according to Nakayama's lemma. Denote by $A$ the corresponding transformation matrix. Since $\{\overline{u}_{i},\overline{v}_{j}\}$ is a basis in $\overline{M}$, the quotient matrix $\overline{A}$ has an inverse. It follows from lemma~\ref{lem:matinv} that the matrix $A$ has an inverse as well. Thus $\{u_{i},v_{j}\}$ is also a basis in $M$. This concludes the proof.
\end{proof}

\begin{corollary}\label{cor:freemod}
	Any direct summand $U$ of a free module $R^{n}$ is also free. Moreover if $\{u_{1},\dots,u_{r}\}$ is a basis in $U$ then $\{\overline{u}_{1},\dots,\overline{u}_{r}\}$ is a basis in $\overline{U}$ and vice versa.
\end{corollary}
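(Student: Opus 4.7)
The plan is to derive the corollary directly from Lemma~\ref{lem:freemod} together with Nakayama's lemma, using the ambient free module $R^{n}$ rather than $U$ itself (since $U$ is not yet known to be free, Lemma~\ref{lem:freemod} cannot be applied with $M=U$; but $U$ sits inside the free module $R^{n}$, and that is enough). First I would observe that, as a direct summand of the finitely generated module $R^{n}$, the submodule $U$ is itself finitely generated, so its reduction $\overline{U}:=U/\mathfrak{m}U$ is a finite-dimensional $k$-vector space of some dimension $r$. Fix a basis $\overline{u}_{1},\ldots,\overline{u}_{r}$ of $\overline{U}$ and arbitrary lifts $u_{i}\in U\subset R^{n}$. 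Because the decomposition $R^{n}=U\oplus U'$ persists after tensoring with $k$, the classes $\overline{u}_{i}$ remain linearly independent in $\overline{R^{n}}=k^{n}$, and so Lemma~\ref{lem:freemod} applied to $M=R^{n}$ yields that $\langle u_{1},\ldots,u_{r}\rangle_{R}$ is a free direct summand of $R^{n}$ of rank~$r$. On the other hand, Nakayama's lemma inside the finitely generated module $U$ forces $u_{1},\ldots,u_{r}$ to generate $U$. Hence $U=\langle u_{1},\ldots,u_{r}\rangle_{R}$ is free of rank~$r$.

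For the ``moreover'' clause, both implications follow cleanly once freeness of $U$ is in hand. If $\{u_{1},\ldots,u_{r}\}$ is a basis of $U$, then $\overline{U}\cong k^{r}$ has dimension $r$ while the images $\overline{u}_{i}$ form a spanning set of cardinality $r$, hence a basis. Conversely, if the $\overline{u}_{i}$ form a basis of $\overline{U}$, then Nakayama shows that the $u_{i}$ generate $U$, while Lemma~\ref{lem:freemod} applied to $M=R^{n}$ guarantees their $R$-linear independence, so they form a basis of $U$.

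I do not expect any real obstacle here, since the hard work has already been done in Lemma~\ref{lem:freemod}. The only point requiring a moment of care is to invoke that lemma with the ambient free module equal to $R^{n}$, not $U$, and to note that $\overline{U}$ embeds into $\overline{R^{n}}$ precisely because $U$ is a direct summand of $R^{n}$; after that, Nakayama matches generation upstairs with generation downstairs and the corollary falls out.
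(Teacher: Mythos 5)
Your proposal is correct and follows essentially the same route as the paper: identify $\overline{U}$ with a subspace of $k^{n}$ via the splitting (the paper phrases this through a retraction $R^{n}\twoheadrightarrow U$, you through tensoring the decomposition $R^{n}=U\oplus U'$ with $k$, which is the same point), then apply Lemma~\ref{lem:freemod} to the ambient $M=R^{n}$ and Nakayama to conclude that the lifts generate $U$. Your justification that the $\overline{u}_{i}$ stay independent in $k^{n}$ precisely because $U$ is a direct summand is the key observation, and your dimension-count argument for the ``moreover'' clause is a slightly cleaner substitute for the paper's explicit surjection $R^{n}\twoheadrightarrow R^{r}$.
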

\begin{proof}
	Choose a projection $R^{n}\twoheadrightarrow U$. By definition it is surjective and its composition $U\hookrightarrow R^{n}\twoheadrightarrow U$ with the natural embedding $U\hookrightarrow R^{n}$
coincides with the identity automorphism $Id_{U}$. Reducing modulo maximal ideal $\mathfrak{m}\subset R$ choose a basis $\{\overline{u}_{1},\dots,\overline{u}_{r}\}$ in the corresponding subspace $\overline{U}$ of $k^{n}$ and fix some lifting $\{u_{1},\dots,u_{r}\}$ that lies in the image of $U\hookrightarrow R^{n}$. Denote by $U^{'}$ the submodule $\langle u_{1},\dots,u_{r}\rangle_{R}$ in $R^{n}$  It follows from Nakayama's lemma, that the morphsim $U^{'}\longrightarrow U$ induced by the projection $R^{n}\twoheadrightarrow U$ is surjective, which in turn means that $U$ and $U^{'}$ actually coincide as $R$-submodules in $R^{n}$. It follows from lemma~\ref{lem:freemod} that $U^{'}$ is a free module with a basis $\{u_{1},\dots,u_{r}\}$.
	
	Likewise, if $U$ is a direct summand of $R^{n}$ and $\{u_{1},\dots,u_{r}\}$ is its basis then there exists a surjective $R$-linear morphism $R^{n}\twoheadrightarrow R^{r}$, that takes $u_{i}$ to the $i$-th element of the standard basis of $R^{r}$. Since the associated quotient morphism $k^{n}\twoheadrightarrow k^{r}$ of $k$-linear vector spaces stays surjective, $\{\overline{u}_{1},\dots,\overline{u}_{r}\}$ is obviously a basis in $\overline{U}$.
\end{proof}

Before introducing the main geometric result of this section we will prove a small technical lemma that will play a cruical part in the proof:
\begin{lemma}\label{lem:orthogonal}
	Let $M$ be a free $R$-module with a fixed non-degenerate quadratic form $Q$. Denote by $O_{R}(M)$ the group of $R$-linear orthogonal automorphisms of $M$. Then the natural homomorphism $O_{R}(M)\rightarrow O_{k}(\overline{M})$ is surjective.
\end{lemma}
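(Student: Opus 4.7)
The natural plan is to reduce the problem to lifting reflections, invoking the Cartan--Dieudonn\'{e} theorem on the residue field side and noting that reflections along anisotropic vectors are defined over $R$ whenever the norm of the vector is a unit.

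Concretely, I would proceed as follows. Fix $\bar{\varphi}\in O_{k}(\overline{M})$. Since $char(k)\neq2$ and the reduced form $\overline{Q}$ on $\overline{M}$ is non-degenerate, the Cartan--Dieudonn\'{e} theorem gives a decomposition
$$
\bar{\varphi}=s_{\bar{v}_{1}}\circ\ldots\circ s_{\bar{v}_{r}}\,,
$$
where each $\bar{v}_{i}\in\overline{M}$ is anisotropic, i.e. $\overline{Q}(\bar{v}_{i})\neq0$, and $s_{\bar{v}_{i}}$ denotes the associated orthogonal reflection. Lift each $\bar{v}_{i}$ arbitrarily to an element $v_{i}\in M$. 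Because $Q(v_{i})$ reduces modulo $\mathfrak{m}$ to the nonzero element $\overline{Q}(\bar{v}_{i})\in k$, the element $Q(v_{i})$ lies outside $\mathfrak{m}$, hence is invertible in $R$.

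Now I would define the reflection over $R$ by the usual formula
$$
s_{v_{i}}(x):=x-\dfrac{2B(x,v_{i})}{Q(v_{i})}\,v_{i}\,,
$$
where $B$ is the polar form associated to $Q$. The denominator being a unit ensures that $s_{v_{i}}$ is a well-defined $R$-linear endomorphism of $M$, and a direct verification shows $Q(s_{v_{i}}(x))=Q(x)$, so $s_{v_{i}}\in O_{R}(M)$. By construction the composition $\varphi:=s_{v_{1}}\circ\ldots\circ s_{v_{r}}$ belongs to $O_{R}(M)$, and reducing modulo $\mathfrak{m}$ recovers $\bar{\varphi}$ since each $s_{v_{i}}$ manifestly reduces to $s_{\bar{v}_{i}}$. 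This establishes the surjectivity of $O_{R}(M)\twoheadrightarrow O_{k}(\overline{M})$.

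The only nontrivial input is the Cartan--Dieudonn\'{e} theorem on the residue field, whose standard proof relies on $char(k)\neq2$ (which is already assumed throughout the section). The rest is the elementary observation that anisotropy over the residue field upgrades to $Q(v_{i})\in R^{\times}$, so that the reflection formula makes sense integrally. No further obstructions arise, and regularity of $R$ is not actually used at this step.
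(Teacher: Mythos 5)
Your proof is correct and follows essentially the same route as the paper: generate $O_{k}(\overline{M})$ by reflections along anisotropic vectors (Cartan--Dieudonn\'{e}), lift each such vector to $M$, observe that its norm is a unit in $R$, and lift each reflection by the same formula. No discrepancies to report.
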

\begin{proof}
	For our convenience we will also denote by $Q$ the corresponding bilinear form. Note that the group $O_{k}(\overline{M})$ is generated by reflections of the type
	$$
	r_{\overline{u}}:\overline{M}\rightarrow\overline{M}\,,\,\, \overline{v}\to\overline{v}-\dfrac{\overline{Q}(\overline{u},\overline{v})}{\overline{Q}(\overline{u},\overline{u})}\overline{u}
	$$
	where $\overline{u}$ goes over all elements of $\overline{M}$ such that $\overline{Q}(\overline{u},\overline{u})\neq0$. In particular this means that for any such $\overline{u}$ and for any of its liftings $u\mapsto\overline{u}$ in $M$ the corresponding element $Q(u,u)$ does not belong to the maximal ideal of $R$ and therefore is invertible. Thus there is a well defined reflection
	$$
	r_{u}:M\rightarrow M\,,\,\, v\to v-\dfrac{Q(u,v)}{Q(u,u)}u\,,
	$$
	
	that belongs to $O_{R}(M)$ and is obviously a lifting of the reflection $r_{\overline{u}}$. This concludes the proof.
\end{proof}

Now let $M$ be a free $R$-module of rank $2n$, with a fixed hyperbolic form $Q$ on it. Suppose that $N\subset M$ is a free direct summand of $M$ such that $rk(N)=n+1$ and the restriction $Q|_{N}$ is non-degenerate. Then the corresponding $k$-linear vector spaces $\overline{M}$ and $\overline{N}$ satisfy the conditions of corollary~\ref{cor:nonclosed}. Indeed if $\{x_{1},\ldots,x_{n+1}\}$ is some basis of $N$ then it follows from corollary~\ref{cor:freemod} that $\{\overline{x}_{1},\ldots,\overline{x}_{n+1}\}$ is a basis of $\overline{N}\subset\overline{M}$. Since the restriction of the form $Q$  on $N$ is non-degenerate the corresponding morphism $Q|_{N}^{\vee}:N\to N^{\vee}$ of $R$-modules is an isomorphism. Therefore the restriction $\overline{Q}|_{\overline{N}}$ is non-degenerate according to lemma~\ref{lem:matinv}. Finally it is obvious that the form $\overline{Q}$ on $\overline{M}$ is also hyperbolic.
Thus, according to corollary~\ref{cor:nonclosed} there exists a totally isotopic $n$-dimensional subspace $\overline{W}\subset\overline{M}$ such that $dim(\overline{N}\cap\overline{W})=1$.

\begin{proposition}\label{prop:mod} Let $M$ be a free $R$-module of rank $2n$, with a fixed hyperbolic form $Q$ on it. Suppose that $N\subset M$ is a free direct summand of $M$ such that $rk(N)=n+1$ and the restriction $Q|_{N}$ is non-degenerate. Then there exists an $R$-submodule $W$ in $M$ such that:
	\begin{itemize}
		\item $W$ is a free direct summand of $M$ of rank $n$;
		\item the quotient morphism  $M\twoheadrightarrow\overline{M}$ surjectively maps $W$ to $\overline{W}$;
		\item $Q|_{W}\equiv0$.
	\end{itemize}
	Additionally, the submodule $W\cap N$ of $N$ will be a free direct summand of rank $1$.
\end{proposition}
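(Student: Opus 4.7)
The strategy is to construct $W$ by conjugating a standard Lagrangian direct summand of $M$ through an orthogonal automorphism whose reduction is prescribed; Lemma~\ref{lem:orthogonal} is the tool that permits this lifting from $\overline{M}$ to $M$.

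First, since $(M,Q)$ is hyperbolic over $R$, I would decompose $M$ as an orthogonal sum of $n$ hyperbolic planes and read off a totally isotropic free direct summand $L_{0}\subset M$ of rank $n$. Its reduction $\overline{L_{0}}$ is a totally isotropic $n$-dimensional subspace of $\overline{M}$. The paragraph preceding the proposition already supplies, via Corollary~\ref{cor:nonclosed}, another such subspace $\overline{W}\subset\overline{M}$, with the additional property $\dim(\overline{W}\cap\overline{N})=1$. Witt's extension theorem over the field $k$ produces an isometry $\overline{\sigma}\in O_{k}(\overline{M})$ sending $\overline{L_{0}}$ to $\overline{W}$; Lemma~\ref{lem:orthogonal} lifts it to $\sigma\in O_{R}(M)$, and setting $W:=\sigma(L_{0})$ immediately yields a free direct summand of $M$ of rank $n$ on which $Q$ vanishes and whose reduction is the prescribed $\overline{W}$.

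The final clause is then a rank calculation. Consider the $R$-linear map $\phi\colon W\oplus N\to M$, $(w,y)\mapsto w+y$. Modulo $\mathfrak{m}$ its image is $\overline{W}+\overline{N}$, of dimension $n+(n+1)-1=2n$ by the standard dimension formula, so $\overline{\phi}$ is surjective and Nakayama upgrades this to surjectivity of $\phi$. Since $M$ is free, the short exact sequence
\[
0\longrightarrow\ker\phi\longrightarrow W\oplus N\longrightarrow M\longrightarrow 0
\]
splits, making $\ker\phi$ projective of rank $n+(n+1)-2n=1$, hence free of rank $1$ over the local ring $R$. Under the identification $(w,-w)\mapsto w$ this kernel is exactly $W\cap N$, so $W\cap N=Rv$ is free of rank $1$. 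To promote it to a direct summand of $N$, Lemma~\ref{lem:freemod} reduces matters to checking that $\overline{v}\in\overline{N}$ is nonzero. Reducing the split sequence modulo $\mathfrak{m}$ identifies $(W\cap N)\otimes_{R}k$ with the line $\{(x,-x):x\in\overline{W}\cap\overline{N}\}\subset\overline{W}\oplus\overline{N}$, and the (nonzero) second coordinate of its generator is precisely the image of $v$ in $\overline{N}$.

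The main obstacle is the construction of $W$ itself: a naive lift to $M$ of a basis of $\overline{W}$ will almost never give a totally isotropic submodule over $R$, because the isotropy relations live in $R$ rather than $k$. Lemma~\ref{lem:orthogonal} sidesteps this by letting one conjugate a ready-made Lagrangian (automatically totally isotropic over $R$) to the prescribed reduction, which is precisely why the preparatory Section~\ref{sec:mod} puts nontrivial effort into the surjectivity $O_{R}(M)\twoheadrightarrow O_{k}(\overline{M})$.
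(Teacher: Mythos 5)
Your proof is correct and follows essentially the same route as the paper: hyperbolic decomposition of $M$, Witt's theorem over $k$, and Lemma~\ref{lem:orthogonal} to transport a standard Lagrangian to a totally isotropic summand reducing to $\overline{W}$. The only (immaterial) difference is in the final clause, where the paper works with the surjection $N\to M/W$ rather than your $W\oplus N\to M$; both arguments rest on the same mod-$\mathfrak{m}$ dimension count, Nakayama's lemma, and the splitting of a surjection onto a free module.
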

\begin{proof} Denote by $\mathbb{H}_{R}$ the hyperbolic plane with a standard basis $\{e,f\}$.
	Since $Q$ is hyperbolic there existis a decomposition $M=\mathbb{H}_{R}\oplus\ldots\oplus\mathbb{H}_{R}$. Denote by $\{e_{1},f_{1},\ldots,e_{n},f_{n}\}$ the corresponding basis in $M$. Then the $R$-submodule $\langle e_{1},\ldots,e_{n}\rangle_{R}\subset M$ is obviously a free totally isotopic summand of $M$ of rank $n$. Accordingly the set  $\{\overline{e}_{1},\overline{f}_{1},\ldots,\overline{e}_{n},\overline{f}_{n}\}$ is a basis of $k$-linear vector space $\overline{M}=\mathbb{H}_{k}\oplus\ldots\oplus\mathbb{H}_{k}$ and the subspace $\langle\overline{e}_{1},\ldots,\overline{e}_{n}\rangle_{k}\subset \overline{M}$ is totally isotopic and has dimension $n$. Choose some basis $\{\overline{w}_{1},\ldots,\overline{w}_{n}\}$ in $\overline{W}$. According to Witt's theorem there exists some $\overline{A}\in O_{k}(\overline{M})$, such that $\overline{A}(\overline{e_{i}})=\overline{w_{i}}$ for any $1\leqslant i\leqslant n$. It follows from lemma~\ref{lem:orthogonal} that $\overline{A}$ has some lifting $A\in O_{R}(M)$. Put $w_{i}:=Ae_{i}$ for $1\leqslant i\leqslant n$. Then the submodule $W:=\langle w_{1},\dots,w_{n}\rangle_{R}\subset M$ obviously satisfies the conditions of proposition~\ref{prop:mod}.
	
	Now denote by $\pi:N\rightarrow M/W$ the composition of the natural embedding $N\hookrightarrow M$ and the quotient homomorphism $M\twoheadrightarrow M/N$. Reducing modulo maximal ideal $\mathfrak{m}\subset R$ we get the $k$-linear morphism $\overline{\pi}:\overline{N}\rightarrow\overline{M}/\overline{W}$. Its kernel coincides with the subspace $\overline{N}\cap\overline{W}\subset\overline{N}$. It immediately follows from the dimension theorem for vector spaces that $\overline{\pi}$ is surjective. It then follows from Nakayama's lemma that $\pi$ is surjective as well. According to lemma~\ref{lem:freemod} the $R$-module $M/W$ is free of rank $n$. Thus, $N\cap W$ is the kernel of the surjective $R$-linear morphism $\pi$ from the free $R$-module $N$ of rank $n+1$ to the free $R$-module $M/W$ of rank $n$. Therefore $N\cap W$ is a direct summand of $N$. The last statement of proposition~\ref{prop:mod} now follows form corollary~\ref{cor:freemod}.
\end{proof}
\begin{corollary}\label{cor:unimod}
	Under the conditions of proposition~\ref{prop:mod} any generating element $w$ of free $R$-module $W\cap N$ of rank $1$ will be unimodular in $M$.
\end{corollary}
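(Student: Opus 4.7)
The plan is to observe that the unimodularity of $w$ in $M$ is an immediate consequence of $W\cap N$ being a free direct summand of $M$ of rank $1$, not merely of $N$. So the key step is to upgrade the ``direct summand of $N$'' conclusion of Proposition~\ref{prop:mod} to a ``direct summand of $M$'' statement.

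First, I would invoke the last sentence of Proposition~\ref{prop:mod}: $W\cap N$ is a free direct summand of $N$ of rank $1$, so we may write $N=(W\cap N)\oplus N'$ for some $R$-submodule $N'\subset N$. Next, by the hypothesis of the proposition, $N$ itself is a free direct summand of $M$, so $M=N\oplus N''$ for some $R$-submodule $N''\subset M$. Composing these two decompositions yields
$$
M=(W\cap N)\oplus N'\oplus N'',
$$
so $W\cap N$ is a rank-$1$ free direct summand of $M$.

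Since $w$ generates $W\cap N$ as a free $R$-module of rank $1$, we have $W\cap N=Rw$, and the projection $M\twoheadrightarrow W\cap N=Rw\xrightarrow{\sim}R$ coming from the decomposition above is an $R$-linear functional $\varphi\colon M\to R$ with $\varphi(w)=1$. This is precisely the definition of $w$ being unimodular in $M$.

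I do not anticipate any real obstacle here: the statement reduces to the transitivity of the ``direct summand'' relation for free modules over a local ring, together with the observation that a generator of a rank-$1$ free direct summand is automatically unimodular. Alternatively, one could appeal to Corollary~\ref{cor:freemod} to pass to the residue field and check that $\overline{w}$ is part of a basis of $\overline{M}$, but the direct-summand argument above is cleaner and avoids any extra lifting step.
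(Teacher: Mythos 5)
Your proof is correct. Both you and the paper hinge on the same key point, namely that $W\cap N$ is a direct summand not just of $N$ but of $M$; the paper simply asserts this ("it follows from Proposition~\ref{prop:mod}"), whereas you make the underlying transitivity of the direct-summand relation explicit by writing $M=(W\cap N)\oplus N'\oplus N''$, which is a small but genuine improvement in completeness. Where the two arguments diverge is in the finishing step: the paper reduces modulo $\mathfrak{m}$, uses Corollary~\ref{cor:freemod} to see that $\overline{w}\neq 0$, extends $\overline{w}$ to a basis of $\overline{M}$ and lifts it via Lemma~\ref{lem:freemod}, thereby exhibiting $w$ as part of a basis of $M$; you instead read off a splitting functional $\varphi\colon M\to R$ with $\varphi(w)=1$ directly from the decomposition. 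Your route is shorter and avoids the residue-field detour entirely (you even note the paper's route as the alternative), at the cost of implicitly using the characterization of unimodularity via a splitting functional rather than via membership in a basis; the two characterizations agree for free modules over a local ring, so there is no gap.
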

\begin{proof}
	It follows from proposition~\ref{prop:mod} that $W\cap N$ is a direct summand of $M$. Therefore according to corollary~\ref{cor:freemod} the element $\overline{w}$ of $k$-linear vector space $\overline{M}$ is not equal to zero. Fix some basis of $M$ that contains $\overline{w}$ and then lift it to $M$ so that $\overline{w}$ is the lift of $w$. The corollary~\ref{cor:unimod} now directly follows from lemma~\ref{lem:freemod}.
\end{proof}

\section{Main result}\label{sec:main}

Here is the main result of this paper:

\begin{theorem}\label{prop:main}
		Let $R$ be a regular local ring of a mixed characteristic $(0,p)$ where $p$ is a prime number other than $2$. Suppose that the quotient ring $R/pR$ is also local and regular. Fix some non-degenerate Pfister form $Q(T_{1},\ldots,T_{2^m})$ over the ring $R$ together with some invertible element $c$ in $R$. Then the equation $Q(T_{1},\ldots,T_{2^m})=c$ has a solution over $R$ if and only if it has a solution over the fraction field $K$ of $R$.
\end{theorem}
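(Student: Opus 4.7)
I will follow the sketch in the introduction. Assume the equation has a $K$-solution, i.e.\ $(a_1,\ldots,a_{2^m}) \in K^{2^m}$ with $Q(a_1,\ldots,a_{2^m}) = c$. Form the auxiliary $(m+1)$-fold Pfister form
$$\widetilde{Q} := Q \perp (-c)Q$$
of rank $2^{m+1}$ over $R$. Since any Pfister form is normalised so that $Q(e_1,0,\ldots,0) = 1$, the vector $(a_1,\ldots,a_{2^m},1,0,\ldots,0) \in K^{2^{m+1}}$ is a nonzero isotropic vector for $\widetilde{Q}_K$. A Pfister form over a field is isotropic if and only if it is hyperbolic, so $\widetilde{Q}_K$ is hyperbolic over $K$. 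The hypotheses of Cesnavicius' descent theorem \cite[Corollary 9.6]{C} are met ($R$ regular local with $R/pR$ regular), hence $\widetilde{Q}$ is hyperbolic already over $R$.

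Next I set up the data for Proposition~\ref{prop:mod}. Write $M := R^{2^{m+1}}$ equipped with $\widetilde{Q}$, decomposed as $M = V_1 \oplus V_2$ with $V_i \cong R^{2^m}$ carrying the forms $Q$ and $-cQ$. Let $e \in V_2$ be the first basis vector, so that $\widetilde{Q}(e) = -c$. Setting $N := V_1 \oplus Re$ gives a free direct summand of $M$ of rank $2^m + 1$ on which $\widetilde{Q}|_N = Q \perp \langle -c \rangle$ is non-degenerate. Proposition~\ref{prop:mod} together with Corollary~\ref{cor:unimod} (applied with $n = 2^m$) then supplies a unimodular isotropic vector $w \in N$: writing $w = (x, t e)$ with $x \in R^{2^m}$ and $t \in R$ yields $Q(x) = c t^2$ and unimodularity of the pair $(x, t) \in R^{2^m + 1}$. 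Provided $t$ is a unit, the required $R$-solution is $y := x/t$.

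Arranging $t \in R^\times$ is the main obstacle: the bare unimodularity of $(x, t)$ leaves open the embarrassing case $t \in \mathfrak{m}$, where $x/t$ only lives in $K$. To rule this out, I refine the choice of the residue-field subspace $\overline{W}$ made inside the proof of Proposition~\ref{prop:mod}. Reducing $\widetilde{Q}$ modulo $\mathfrak{m}$ preserves hyperbolicity, so its reduction is hyperbolic over the residue field $k$; by the Pfister dichotomy over fields (the form $\langle 1,-c\rangle \otimes \overline{Q}$ is hyperbolic if and only if $\overline{Q}$ represents $\overline{c}$) there exists $\overline{y} \in k^{2^m}$ with $\overline{Q}(\overline{y}) = \overline{c}$, producing an isotropic vector $(\overline{y}, \overline{e}) \in \overline{N}$ whose $\overline{t}$-coordinate equals $1$. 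The proof of Proposition~\ref{prop:vect} (via the surjectivity of $\lambda_1$, which is essentially Witt extension) shows that every isotropic line of $\overline{N}$ arises as $\overline{W} \cap \overline{N}$ for some totally isotropic $n$-dimensional $\overline{W} \subset \overline{M}$; choosing $\overline{W}$ that realises the line $k \cdot (\overline{y}, \overline{e})$ and then lifting as in Proposition~\ref{prop:mod} produces $w = (x, t e)$ with $\overline{t} = 1$, i.e.\ $t \in R^\times$, as desired.
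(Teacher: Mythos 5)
Your main line is the same as the paper's: form $\widetilde{Q}=Q\otimes\langle 1,-c\rangle$, observe it is isotropic hence hyperbolic over $K$, descend hyperbolicity to $R$ via Cesnavicius, and feed $M=R^{2^{m+1}}$ together with the rank-$(2^m+1)$ summand $N$ carrying $Q\perp\langle -c\rangle$ into Proposition~\ref{prop:mod} and Corollary~\ref{cor:unimod}. Where you diverge is the last step, and that is exactly where you have correctly identified the real difficulty: the unimodular isotropic vector $w=(x,t)$ may have $t\in\mathfrak m$. The paper resolves this not by refining the choice of $\overline W$ but by Lemma~\ref{lem:tech} (a reflection argument going back to Colliot-Th\'el\`ene): for a diagonal form $\alpha_1T_1^2+\dots+\alpha_nT_n^2$ with unit coefficients, the existence of \emph{any} unimodular isotropic vector over $R$ is equivalent to $-\alpha_n$ being represented by the first $n-1$ variables, because one can apply a reflection $r_u$ with $u$ chosen so that the $n$-th coordinate of $r_u(v)$ becomes a unit. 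Since a Pfister form is diagonal with unit entries, this applies verbatim to $Q\perp\langle-c\rangle$ and finishes the proof with no further geometry. Your proposal should either invoke this lemma or prove an equivalent statement; as written it silently replaces it with a stronger geometric claim.

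That stronger claim is where your argument has a gap. You assert that every isotropic line of $\overline N$ arises as $\overline W\cap\overline N$ for a totally isotropic $n$-dimensional $\overline W\subset\overline M$, citing the surjectivity of $\lambda_1$ from the proof of Proposition~\ref{prop:vect}. But Proposition~\ref{prop:vect} is a statement over an algebraically closed field, and surjectivity of a morphism of varieties over $\bar k$ does not give surjectivity on $k$-rational points: you need a $k$-point in the specific fibre $\lambda_1^{-1}\bigl(k\cdot(\overline y,\overline e)\bigr)$, i.e.\ a Lagrangian $\overline W$ defined over $k$ meeting $\overline N$ exactly in your chosen line (exactness matters, since Proposition~\ref{prop:mod} needs $\dim(\overline W\cap\overline N)=1$ for the Nakayama step). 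This can be repaired when $k$ is infinite — the fibre is a non-empty open subset of the $k$-rational variety of Lagrangians of $U^{\perp}/U$ containing the prescribed line, by the same dimension count as in Proposition~\ref{prop:vect} — but that is an additional argument you must supply, essentially a pointed version of Corollary~\ref{cor:nonclosed}. The paper's route via Lemma~\ref{lem:tech} avoids this entirely, which is why it is the cleaner of the two. (Two further small points: your $\overline t$ equals a nonzero scalar, not necessarily $1$, since the generator of $W\cap N$ is only determined up to a unit — harmless; and both arguments, yours and the paper's, use the standing assumption that the residue field is infinite, which the theorem statement omits but the introduction acknowledges.)
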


The proof of proposition~\ref{prop:main} is heavily based on recent Cesnavicius Theorem \cite{C}.

We will also need the following technical lemma, that was originally proven in \cite{C-T}. Nevertheless, we still decided to give its detailed proof:

\begin{lemma}\label{lem:tech}
	Let $R$ be a local ring with its residue field $k$ of characteristic other than $2$ and let $\alpha_{1},\dots,\alpha_{n}$ be invertible elements of $R$ with $n>2$.
	Then the equation
	
	\begin{equation}\label{eq:sol1}
		\alpha_{1}T_{1}^{2}+\dots+\alpha_{n}T_{n}^{2}=0
	\end{equation}
	has a unimodular solution in  $R^{n}$ if and only if the equation
	\begin{equation}\label{eq:sol2}
		\alpha_{1}T_{1}^{2}+\dots+\alpha_{n-1}T_{n-1}^{2}=-\alpha_{n}
	\end{equation}
	has a solution in $R^{n-1}$.
\end{lemma}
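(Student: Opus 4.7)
The plan is to treat the two implications separately, the interesting direction being a reduction to the residue field $k$ via Lemma~\ref{lem:orthogonal}. The backward implication is immediate: if $(t_{1},\ldots,t_{n-1}) \in R^{n-1}$ solves~\eqref{eq:sol2}, then $(t_{1},\ldots,t_{n-1},1) \in R^{n}$ is a unimodular solution of~\eqref{eq:sol1}, its last coordinate being the unit $1$.

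For the forward implication, suppose $v=(t_{1},\ldots,t_{n})$ is a unimodular isotropic vector for the non-degenerate diagonal form $Q:=\langle\alpha_{1},\ldots,\alpha_{n}\rangle$ on $M:=R^{n}$. If $t_{n}$ were a unit one would simply divide $\sum_{i}\alpha_{i}t_{i}^{2}=0$ by $t_{n}^{2}$ to recover a solution of~\eqref{eq:sol2}, so the whole task reduces to replacing $v$ by another unimodular isotropic vector whose last coordinate is a unit. I would accomplish this by transporting $v$ with a suitable element of the orthogonal group $O_{R}(M)$.

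At the residue-field level, $\overline{v}\in\overline{M}$ is non-zero by unimodularity, so $\overline{Q}$ is isotropic. Witt cancellation (valid since $\operatorname{char}(k)\neq 2$) applied to the orthogonal decomposition $\overline{Q}=\langle\overline{\alpha_{1}},\ldots,\overline{\alpha_{n-1}}\rangle\perp\langle\overline{\alpha_{n}}\rangle$ shows that $\langle\overline{\alpha_{1}},\ldots,\overline{\alpha_{n-1}}\rangle$ represents $-\overline{\alpha_{n}}$ over $k$; choosing $\overline{s_{1}},\ldots,\overline{s_{n-1}}\in k$ witnessing this and setting $\overline{w}:=(\overline{s_{1}},\ldots,\overline{s_{n-1}},1)$ produces a non-zero isotropic vector with last coordinate $1$. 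By Witt's theorem the two non-zero isotropic vectors $\overline{v},\overline{w}$ lie in a single $O_{k}(\overline{M})$-orbit, so some $\overline{A}\in O_{k}(\overline{M})$ carries $\overline{v}$ to $\overline{w}$.

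It then remains to lift. Lemma~\ref{lem:orthogonal} produces $A\in O_{R}(M)$ with residue $\overline{A}$; the vector $v':=Av$ is again a unimodular isotropic vector for $Q$, and its last coordinate reduces to $1\in k$ and is therefore a unit of $R$. Dividing $Q(v')=0$ by the square of this unit gives a solution of~\eqref{eq:sol2} and finishes the argument. I do not anticipate a genuine obstacle: the residue-field step is standard Witt theory in characteristic different from $2$, the lift is exactly the content of Lemma~\ref{lem:orthogonal}, and the remainder is bookkeeping.
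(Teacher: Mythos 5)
Your proof is correct, and the overall strategy --- move the unimodular isotropic vector by an orthogonal transformation that, modulo $\mathfrak{m}$, sends $\overline{v}$ to an isotropic vector with non-vanishing last coordinate --- is the same as the paper's. The execution differs in how that transformation is produced. You first use the standard representation criterion (a non-degenerate isotropic form of dimension $n-1\geq 2$ is universal, which is where the hypothesis $n>2$ enters --- your appeal to ``Witt cancellation'' is a slight misnomer, but the fact is standard) to manufacture a target isotropic vector $\overline{w}$ with last coordinate $1$, then invoke Witt's extension theorem to get $\overline{A}\in O_{k}(\overline{M})$ with $\overline{A}\overline{v}=\overline{w}$, and finally lift $\overline{A}$ by Lemma~\ref{lem:orthogonal}. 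The paper instead constructs a \emph{single explicit reflection} $r_{u}$: it chooses $\overline{u}\in k^{n}$ supported on three coordinates $i,j,n$ and verifies by hand the three inequalities $\overline{Q}(\overline{u},\overline{v})\neq 0$, $\overline{Q}(\overline{u},\overline{u})\neq 0$, $\overline{u}_{n}\neq 0$, using only that $k$ has more than two elements. So the paper's argument is self-contained and elementary (it never invokes Lemma~\ref{lem:orthogonal} or Witt's extension theorem in this proof), while yours is shorter to state but leans on more machinery --- in particular on Cartan--Dieudonn\'e, which is hidden inside Lemma~\ref{lem:orthogonal}. One cosmetic point: Lemma~\ref{lem:orthogonal} is stated in a section where $R$ is assumed regular, but its proof uses only that $R$ is local with $\mathrm{char}(k)\neq 2$, so your appeal to it in the setting of Lemma~\ref{lem:tech} (an arbitrary local ring) is legitimate; it would be worth saying so explicitly.
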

\begin{proof}
	The first implication is trivial: $(v_{1},\dots,v_{n-1})$ being a solution of~\ref{eq:sol2} obviously implies $(v_{1},\dots,v_{n-1},1)$ being a unimodular solution of~\ref{eq:sol1}.
	
	Let  $v:=(v_{1},\dots,v_{n})$ be some unimodular solution of~\ref{eq:sol1}. Note that if element $v_{n}$ does not belong to the maximal ideal of $R$ i.e. is invertible, then
	$$
	\Big(\dfrac{v_{1}}{v_{n}},\dots,\dfrac{v_{n-1}}{v_{n}}\large\Big)
	$$
	is obviously a solution of~\ref{eq:sol2}.
	
	Suppose now that $v_{n}$ is not invertible. As earlier, denote the diagonal quadratic form, corresponding to~\ref{eq:sol1} by $Q$ as well as the associated bilinear form.  Note that for any $u:=(u_{1},\dots,u_{n})$ such that $Q(u,u)$ is invertible in $R$ the result of applying reflection $r_{u}$ to $v$ defines another solution
	$$
	r_{u}(v)= v-\dfrac{Q(u,v)}{Q(u,u)}u\,,
	$$
	of~\ref{eq:sol1}. Moreover, if we assume that $u_{n}$ and $Q(u,v)$ are both invertible in $R$, then $r_{u}(v)$ will also be a solution for~\ref{eq:sol2} since its $n$-th coordinate will also be invertible in $R$.
	Note that $u$ satisfies these assumptions if and only if the corresponding vector $\overline{u}:=(\overline{u}_{1},\ldots,\overline{u}_{n})\in k^n$ satisfies the following inequalities
	$$
	\overline{Q}(\overline{u},\overline{v})\neq0\,,\,\,
	\overline{Q}(\overline{u},\overline{u})\neq0\,,\,\,
	\overline{u}_{n}\neq0\,.
	$$
 Moreover, if there exists such a vector $\overline{u}$ in $k^{n}$ one can just lift it to $R^{n}$ to get a required $u$.
	
	 Since  $v$ is unimodular there exists at least one $i<n$ such that the corresponding coordinate $\overline{v}_{i}$ is not equal to zero. Let us fix such an $i$ together with some $j<n$, other than $i$. Assume that all the coordinates of $\overline{u}$ with indices other than $i$, $j$ and $n$ are equal to zero.
	Then the system of above inequalities will take the following form (for simplicity we will omit the overline symbol):
	$$
	\alpha_{i}v_{i}u_{i}+\alpha_{j}v_{j}u_{j}\neq0\,,\,\,
	\alpha_{i}u^{2}_{i}+\alpha_{j}u^{2}_{j}+\alpha_{n}u^{2}_{n}\neq0\,,\,\,
	u_{n}\neq0\,.
	$$
	Denote $\dfrac{u_{i}}{u_{n}}$ and $\dfrac{u_{j}}{u_{n}}$ by $x$ and $y$ accordingly. Then  the system above can be simplified up to the following form
	
	$$
	\alpha_{i}v_{i}x+\alpha_{j}v_{j}y\neq0\,,\,\,
	\alpha_{i}x^{2}+\alpha_{j}y^{2}\neq-\alpha_{n}\,.
	$$
	Note that if $\alpha_{i}\neq-\alpha_{n}$ then $x=1$ and $y=0$ obviously satisfy this system. On the contrary, if $\alpha_{i}=-\alpha_{n}$ then it is enough to put $x=1$ and find some non-zero $y$ that satisfies the inequality $\alpha_{j}v_{j}y\neq-\alpha_{i}v_{i}$. One can always find such an $y$ since the residue field $k$ containes more than two non-zero elements. This concludes the proof.
\end{proof}

{\it We will now give the proof of theorem~\ref{prop:main}}. 
The first implication is trivial. Assume that the equation $Q(T_{1},\ldots,T_{2^{m}})=c$ has a solution over $K$. 
Then the equation $Q(T_{1},\ldots,T_{2^{m}})-cT_{2^{m}+1}=0$ also has some non-trivial solution $(a_{1},\ldots,a_{2^{m}+1})$ over $K$. 
According to lemma~\ref{lem:tech} the proof of the second implication can be reduced to finding some unimodular solution of the equation 
$Q(T_{1},\ldots,T_{2^{m}})-cT_{2^{m}+1}=0$ over $R$. Denote by $\widetilde{Q}(T_{1},\ldots,T_{2^{m+1}})$ the tensor product of $Q$ and 
the Pfister form $\langle1,-c\rangle$. Then the vector $(a_{1},\ldots,a_{2^{m}+1},0,\ldots,0)$ is a solution of the equation 
$\widetilde{Q}(T_{1},\ldots,T_{2^{m+1}})=0$ over $K$. Since $\widetilde{Q}$ is a Pfister form this means 
that it is hyperbolic over $K$. According to 
Cesnavicius \cite[Corollary 9.6]{C} the form $\widetilde{Q}$ is also hyperbolic over $R$. 
Put $M=R^{2^{m+1}}$ and take a free $R$-submodule $N\subset M$ generated by the first $2^{m}+1$ basis elements of $M$. 
Then the pair $(M,N)$ satisfies the conditions of proposition  ~\ref{prop:mod}. 
In particular, the unimodular isotopic element $w$ from corollary~\ref{cor:unimod} provides a unimodular solution for the equation 
$Q(T_{1},\ldots,T_{2^{m}})-cT_{2^{m}+1}=0$ over $R$. 
{\it This concludes the proof of theorem~\ref{prop:main}.
}

\begin{corollary}\label{cor:main}
	Let $Q(T_{1},\ldots,T_{2^{m}})$ be the Pfister form from theorem~\ref{prop:main}. 
Then the set $S_{Q}$ of all the invertible elements $c\in R^{*}$ that are represented by the form $Q$ is a multiplicative subgroup in $R^{*}$.
	\end{corollary}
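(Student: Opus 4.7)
The plan is to reduce the statement over $R$ to the classical fact that for a non-degenerate Pfister form over a field, the set of non-zero values it represents is a multiplicative subgroup of the multiplicative group of the field. Theorem~\ref{prop:main} is precisely the bridge that lets us transport this from the fraction field $K$ back to $R$.

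First I would check that $1 \in S_Q$. This is immediate: any Pfister form $Q$ satisfies $Q(1,0,\ldots,0)=1$ by construction, so $1$ is represented by $Q$ over $R$ and $1 \in S_Q$.

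Next, for closure under multiplication, take $c_1, c_2 \in S_Q$. By definition, both $Q = c_1$ and $Q = c_2$ are solvable over $R$, hence a fortiori over the fraction field $K$. The base change $Q_K$ is a non-degenerate Pfister form over the field $K$, and Pfister's classical theorem asserts that the set $D_K(Q) := \{a \in K^* \mid Q_K \text{ represents } a\}$ is a subgroup of $K^*$. In particular $c_1 c_2 \in D_K(Q)$, so the equation $Q = c_1 c_2$ has a solution over $K$. Since $c_1 c_2 \in R^*$ is invertible in $R$, Theorem~\ref{prop:main} applied to the pair $(Q, c_1 c_2)$ promotes this solution from $K$ to $R$, giving $c_1 c_2 \in S_Q$.

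For closure under inverses, the argument is identical: if $c \in S_Q$, then over $K$ the element $c^{-1}$ lies in the group $D_K(Q)$, so $Q = c^{-1}$ is solvable over $K$, and Theorem~\ref{prop:main} again yields a solution over $R$, so $c^{-1} \in S_Q$. Combined with the previous two steps, $S_Q$ is a subgroup of $R^*$.

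There is no real obstacle here; the work has already been done. The only conceptual point to verify is that one may legitimately invoke Pfister's group-of-values theorem for $Q_K$, which is unproblematic since $Q_K$ is still a non-degenerate Pfister form over a field of characteristic $\ne 2$. Everything else is a direct application of Theorem~\ref{prop:main}.
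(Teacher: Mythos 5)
Your proof is correct and follows essentially the same route as the paper: both arguments use Pfister multiplicativity over the fraction field $K$ to get a solution of $Q=c_{1}c_{2}$ over $K$ and then invoke Theorem~\ref{prop:main} to descend it to $R$. The only (cosmetic) difference is the inverse step: the paper avoids the detour through $K$ entirely by the direct scaling identity $Q(c^{-1}v)=c^{-2}Q(v)=c^{-1}$ for a solution $v$ of $Q=c$ over $R$, which is slightly more economical than re-applying the main theorem, but your version is equally valid.
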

\begin{proof}
	Suppose that the elements $c_{1,2}\in R^{*}$ are represented by $Q$, that is the equations $Q=c_{1}$ and $Q=c_{2}$ have some solutions over $R$. Since Pfister forms are mutliplicative over fields this means that the equation $Q=c_{1}c_{2}$ has some solution over $K$. According to theorem~\ref{prop:main} it also has a solution over $R$. Thus $c_{1}c_{2}$ is represented by $Q$ and therefore $S_{Q}$ is closed under multiplication. If there is a solution $v$ for the equation $Q=c$ then $Q(c^{-1}v)=c^{-2}Q(v)=c^{-1}$. Therefore $S_{Q}$ is closed under taking an inverse. Finally since $Q$ is a Phister form the equation $Q=1$ always has a solution. This concludes the proof.
\end{proof}

\end{document}